\documentclass{amsart}

\usepackage{epsfig}
\usepackage{amsthm,amsfonts}
\usepackage{amssymb,graphicx,color}
\usepackage[all]{xy}
\usepackage{verbatim}
\usepackage{hyperref}
\usepackage{enumitem}

\usepackage[square,comma]{natbib}
\usepackage[a4paper,top=2.5cm,bottom=2.5cm,left=3.5cm,right=3.5cm]{geometry}

\newtheorem{theorem}{Theorem}[section]
\newtheorem*{theorem*}{Theorem}

\newtheorem{corollary}[theorem]{Corollary}
\newtheorem{proposition}[theorem]{Proposition}
\newtheorem{definition}[theorem]{Definition}

\newtheorem{example}[theorem]{Example}
\newtheorem{remark}[theorem]{Remark}

\newcommand{\R}{\mathbb{R}}
\newcommand{\C}{\mathbb{C}}
\newcommand{\N}{\mathbb{N}}

\makeatletter
\newcommand{\mlabel}[2]{\def\@currentlabel{#2}\label{#1}} 
\makeatother

\begin{document}

\title[Tangent cones of Lip. normally embedded sets are Lip. normally embedded]
{Tangent cones of Lipschitz normally embedded sets are Lipschitz normally embedded. Appendix by Anne Pichon and Walter D Neumann}

\author{Alexandre Fernandes}
\author{J. Edson Sampaio}
\address{Alexandre ~Fernandes and J. Edson ~Sampaio -
Departamento de Matem\'atica, Universidade Federal do Cear\'a Av.
Humberto Monte, s/n Campus do Pici - Bloco 914, 60455-760
Fortaleza-CE, Brazil}
 \email{alex@mat.ufc.br}
 \email{edsonsampaio@mat.ufc.br}

\keywords{Lipschitz normally embedded sets, Analytic sets}
\subjclass[2010]{14B05; 32S50}
\thanks{The first named author were partially supported by CNPq-Brazil}

\begin{abstract}
We prove that tangent cones of Lipschitz normally embedded sets are Lipschitz normally embedded. We also extend to real subanalytic sets  the notion of reduced tangent cone  and  we show that subanalytic Lipschitz normally embedded sets have reduced tangent cones. In particular, we get that Lipschitz normally embedded complex analytic sets have reduced tangent cones.
\end{abstract}

\maketitle

\section{Introduction}

This work was inspired in the following question which we learned from A. Pichon and W. Neumann in 2015: {\it does Lipschitz normally embedded complex analytic singularities have reduced and Lipschitz normally embedded tangent cones?}

A subset $X\subset\R^n$ is called \emph{Lipschitz normally embedded} when the inner and outer distance are bi-Lipschitz equivalent on $X$ (see Definition \ref{definition-ne}).  The notion of normal embedding, in the setting of Lipschitz Geometry of Singularities, has been  studied since 2000s  with the seminal paper \cite{BirbrairM:2000}. In \cite{NeumannPP}, the authors  showed that minimal singularities are characterized among rational surface singularities by being Lipschitz normally embedded. Another interesting work on this subject is the recent preprint \cite{KernerPR:2017} where it is proved normal embedding for several determinantal singularities. At the 2016 S\~ao Carlos Workshop on Real and Complex Singularities, A. Pichon announced a positive answer for the question above in the case of normal surfaces and also gave an example which adressed questions about a converse of this result (see Appendix). 

The main goals of this paper are to extend to real subanalytic sets the notion of  reduced tangent cones and to show that the same statement of Neumann-Pichon result above holds true for such a class of sets, that is, subanalytic Lipschitz normally embedded subsets $X\subset\R^n$ have reduced and Lipschitz normally embedded tangent cones.

In order to know details about subanalytic sets, see, for example, \cite{BierstoneM:1988}.

\bigskip

\section{Lipschitz normally embedded sets and tangent cones}\label{section:preliminaries}
\subsection{Lipschitz normally embedded sets}

Let $X\subset\R^m$ be a path connected subset. Let us consider the following distance on $X$:  given two points $x_1,x_2\in X$, $d_X(x_1,x_2)$  is the infimum of the lengths of paths on $X$ connecting $x_1$ to $x_2$. This is what we are going to call
{\bf inner distance}  on $X$. Let us observe that
$$ \| x_1 - x_2 \| \leq d_X(x_1,x_2), \quad \forall \ x_1,x_2\in X.$$

\begin{definition}\label{definition-ne}
A subset $X\subset\R^m$ is called \emph{{\bf Lipschitz normally embedded}} if there exists a constant $\lambda\geq 1$ such that
$$d_X(x_1 , x_2) \leq  \lambda \| x_1 - x_2 \|, \quad \forall \ x_1,x_2\in X.$$
Given $p\in X$, we say that $X$ is \emph{{\bf Lipschitz normally embedded at $p$}} if there exists an open neighborhood $U\subset \R^m$ of $p$ such that $X\cap U$ is Lipschitz normally embedded.
\end{definition}

It is valuable to observe that, if $p$ is a smooth point of a subset $X\subset\R^m$ in the sense that there is a neighborhood $V$ of $p$ in $X$ such that $V\subset\R^n$ is a smooth submanifold, then $X$ is Lipschitz normally embedded at $p$. In particular, any compact, connected submanifold of $\R^m$  is Lipschitz normally embedded. In other words, normal embedding property is, somehow, a measure of regularity of Euclidean subsets.

In another way, let us finish this subsection giving examples  of  Euclidean subsets which are not Lipschitz normally embedded, for instance, the real (or complex) cusp
$$\{(x,y) \in \mathbb{K}^2 \ | \ x^3=y^2 \}$$ is not Lipschitz normally embedded at the origin $0\in\mathbb{K}^2$ ($\mathbb{K}=\R,\C$).

Let us emphasize that all the Euclidean subsets considered in this paper are supposed to be equipped with the Euclidean induced metric.

\subsection{Tangent cones}

Let $X\subset \R^m$ be a subset and let $p\in \overline{X}$.
\begin{definition}
Fixed a sequence of positive real numbers $S=\{t_j\}_{j\in \N}$ such that $\lim\limits_{j\to \infty} t_j=0,$ we say that a vector $v\in \R^m$ is a {\bf direction of $X$ at $p$} (with respect to $S$), if there exist a sequence $\{x_j\}_{j\in \N} \subset X\setminus \{p\}$ and $j_0\in \N$ satisfying $\|x_j-p\|=t_j$ for all $j\geq j_0$ and $\lim\limits_{j\to \infty}\frac{x_j-p}{t_j}=v$. The set of all directions of $X$ at $p$ (with respect to $S$) is denoted by $D_p^{S}(X)$. 
\end{definition}
\begin{definition}
We say that $Z\subset \R^{m}$ is {\bf a tangent cone of $X$ at $p$} if there is a sequence $S=\{t_j\}_{j\in \N}$ of positive real numbers such that $\lim\limits_{j\to \infty} t_j=0$ and 
$$
Z=\{tv;\, v\in D_p^S(X)\mbox{ and }t\geq 0\}.
$$
When $X$ has a unique tangent cone at $p$, we denote it for $T_{p}X$ and we call $T_{p}X$ {\bf the tangent cone of $X$ at $p$}.
\end{definition}

Another way to present the tangent cone of a subset $X\subset\R^m$ at the origin $0\in\R^m$ is via the spherical blow-up of $\R^m$ at the point $0$ (it is also done in \cite{BirbrairFG:2017} and also in \cite{Sampaio:2017}) as it is going to be done in the following: let us consider the {\bf spherical blowing-up} at the origin of $\R^m$
$$
\begin{array}{ccl}
\rho_m\colon\mathbb{S}^{m-1}\times [0,+\infty )&\longrightarrow & \R^m\\
(x,r)&\longmapsto &rx
\end{array}
$$

Notice that $\rho_m\colon\mathbb{S}^{m-1}\times (0,+\infty )\to \R^m\setminus \{0\}$ is a homeomorphism with inverse mapping $\rho_m^{-1}\colon\R^m\setminus \{0\}\to \mathbb{S}^{m-1}\times (0,+\infty )$ given by $\rho_m^{-1}(x)=(\frac{x}{\|x\|},\|x\|)$. Let us denote
$$X':=\overline{\rho_m^{-1}(X\setminus \{0\})} \mbox{ and } \partial X':=X'\cap (\mathbb{S}^{m-1}\times \{0\}).$$

\begin{remark}
{\rm It is easy to see that $D_0^S X\times \{0\}\subset \partial X'$, for any sequence of positive real numbers $S=\{t_j\}_{j\in \N}$ such that $\lim\limits_{j\to \infty} t_j=0$. If $X\subset \R^m$ is a subanalytic set and $0\in X$, then $X$ has a unique tangent cone at $0$ and $\partial X'=\mathbb{S}_0X\times \{0\}$, where $\mathbb{S}_0X=T_0X\cap \mathbb{S}^{m-1}$.}
\end{remark}
Let us show why we required subanalycity of the set $X$ to get the claim above. If $w=(v,0)\in \partial X'$, then by Curve Selection Lemma, there is a continuous curve $\alpha: [0,\varepsilon)\to X$ such that $\lim\limits _{t\to 0^+}\frac{\alpha(t)}{\|\alpha(t)\|}=v$. Thus, if $S=\{t_j\}_{j\in \N}$ is a sequence of positive numbers such that $\lim\limits _{j\to +\infty } t_j=0$, then there is $j_0$ satisfying: $t_j\in[0,\delta ),$ for all $j\geq j_0$, where $\delta$ is the positive number such that $\|\alpha\|([0,\varepsilon ))=[0,\delta)$. Then, for each $j\geq j_0$, there is $s_j\in [0,\varepsilon )$ such that $\|\alpha(s_j)\|=t_j$. Thus, if we define $x_j=\alpha(s_j)$, when $j\geq j_0$ and $x_j=\alpha(s_{j_0})$, when $j< j_0$, we get
$$
\lim\limits _{j\to +\infty }\frac{x_j}{t_j}=v.
$$
This shows the other inclusion $\partial X'\subset D_0^S X\times \{0\}$.

\begin{remark}\label{remark-tangent-cone}
{\rm In the case where $X\subset \C^m$ is a complex analytic set such that $0\in X$, $T_{0}X$ is the zero set of a set of complex homogeneous polynomials (see \cite{Whitney:1972}, Theorem 4D). In particular, $T_{0}X$ is a complex algebraic subset of $\C^m$ given by the union of complex lines passing through the origin $0\in\C^m$. More precisely,  let $\mathcal{I}(X)$ be the ideal of $\mathcal{O}_m$ given by the germs which vanishes on $X$. For each $f\in\mathcal{O}_m$, let
$$ f = f_k+f_{k+1}+\cdots $$ be its Taylor development where each $f_j$ is a homogeneous polynomial of degree $j$ and $f_k\neq 0$. So, we say that $f_k$ is the \emph{initial part} of $f$ and we denote it by ${\bf in}(f)$. In this way, $T_0X$ is the affine variety of the ideal $\mathcal{I}^*(X)=\{{\bf in}(f);\, f\in \mathcal{I}(X)\}$}.
\end{remark}

\begin{theorem}\label{ne_set_ne_cone} Let $X$ be a subset of $\R^m$; $x_0\in X$. Suppose that $X$ has a unique tangent cone at $x_0$. If $X$ is Lipschitz normally embedded, then its tangent cone at $x_0$ is Lipschitz normally embedded as well.
\end{theorem}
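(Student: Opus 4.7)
The plan is a rescale-and-compactness argument. Assume $x_0=0$ and let $\lambda\geq1$ be a Lipschitz normal embedding constant for $X$. Fix $v_1,v_2\in T_0X$; since $T_0X$ is a cone with vertex at $0$ it is path connected, so $d_{T_0X}$ is well defined, and the goal is to produce a rectifiable path in $T_0X$ from $v_1$ to $v_2$ of length at most $\lambda\|v_1-v_2\|$. The idea is to approximate the $v_i$ by rescaled points of $X$, join the approximants inside $X$ by almost-minimising paths supplied by the LNE hypothesis, rescale these paths, and pass to a limit via Arzel\`a--Ascoli.

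To implement this, pick any sequence $t_j\to 0^+$, and for each $i=1,2$ choose $p_j^{(i)}\in X$ with $p_j^{(i)}/t_j\to v_i$; uniqueness of the tangent cone makes this possible, since the rescaled sets $(1/t_j)X$ converge to $T_0X$ in the Kuratowski sense along any such sequence. Using $d_X(p_j^{(1)},p_j^{(2)})\leq\lambda\|p_j^{(1)}-p_j^{(2)}\|$, pick rectifiable paths $\gamma_j$ in $X$ from $p_j^{(1)}$ to $p_j^{(2)}$ of length at most $(1+1/j)\lambda\|p_j^{(1)}-p_j^{(2)}\|$. The rescaled curves $\tilde\gamma_j:=\gamma_j/t_j$, which take values in $(1/t_j)X$, then have length at most $(1+1/j)\lambda\|p_j^{(1)}/t_j-p_j^{(2)}/t_j\|\to\lambda\|v_1-v_2\|$. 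Reparametrise each $\tilde\gamma_j$ proportionally to arc length on $[0,1]$; the resulting family is uniformly Lipschitz with convergent endpoints, so Arzel\`a--Ascoli extracts a subsequence converging uniformly to a continuous curve $\gamma\colon[0,1]\to\R^m$ with $\gamma(0)=v_1$, $\gamma(1)=v_2$, and $\mathrm{length}(\gamma)\leq\lambda\|v_1-v_2\|$ by lower semicontinuity of length under uniform convergence.

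The step I expect to be the main obstacle is verifying that $\gamma(s)\in T_0X$ for every $s\in[0,1]$, so that $\gamma$ is actually a path in the tangent cone rather than merely in $\R^m$. I would argue pointwise: set $y_j^{(s)}:=t_j\tilde\gamma_j(s)$, so that $y_j^{(s)}\in X$, $\|y_j^{(s)}\|\to 0$, and $y_j^{(s)}/\|y_j^{(s)}\|=\tilde\gamma_j(s)/\|\tilde\gamma_j(s)\|\to\gamma(s)/\|\gamma(s)\|$ (when $\gamma(s)\neq 0$; the case $\gamma(s)=0$ is immediate). Hence $\gamma(s)/\|\gamma(s)\|$ is a direction of $X$ at $0$ with respect to the scale sequence $\{\|y_j^{(s)}\|\}$, and uniqueness of the tangent cone promotes this to a direction with respect to every scale sequence, placing $\gamma(s)$ in $T_0X$. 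Combined with the length bound from the previous paragraph, this shows $d_{T_0X}(v_1,v_2)\leq\lambda\|v_1-v_2\|$, completing the proof.
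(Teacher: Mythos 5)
Your proposal is correct, and its core — rescale almost-minimising paths in $X$ by $1/t_j$, extract a uniform limit via Arzel\`a--Ascoli, and bound the limit's length — is exactly the paper's argument. There is, however, one genuine difference worth noting. The paper first proves the estimate only for \emph{unit} vectors $v,w\in T_0X$ (its approximating sequences satisfy $\|x_n\|=\|y_n\|=t_n$, forced by the definition of a direction), and then handles arbitrary points $x,y$ of the cone by a separate elementary step: projecting $y$ radially to $y'=\frac{\|x\|}{\|y\|}y$, observing the angle at $y'$ is obtuse, and concatenating, which degrades the constant to $1+\lambda$. You instead approximate arbitrary $v_1,v_2\in T_0X$ directly by points $p_j^{(i)}$ with $p_j^{(i)}/t_j\to v_i$ (justified, as you say, by uniqueness of the cone: a unit direction realised at scales $\{t_j\}$ is also realised at scales $\{\|v_i\|t_j\}$), which eliminates the radial-projection step entirely and yields the cleaner conclusion $d_{T_0X}(v_1,v_2)\le\lambda\|v_1-v_2\|$ for all pairs. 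You also make explicit the point the paper glosses over, namely why the limit curve actually lies in $T_0X$: each $\gamma(s)$ is a limit of rescaled points of $X$, hence (again by uniqueness of the tangent cone) a point of the cone. Your boundedness argument is also lighter — the paper confines the rescaled arcs to a ball $B[0,Kt]$ by a separate geometric estimate, whereas you get equiboundedness for free from the uniform length bound and convergent endpoints. In short: same method, but a streamlined and slightly sharper execution.
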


\begin{proof} Let us suppose that $x_0$ is the origin of $\R^m$. Let $\lambda$ be a real positive number such that  $$d_X(x,y)\leq \lambda \|x-y\|, \quad \forall \ x,y\in X.$$
Given $\varepsilon >0$, let $K>\lambda+1+\varepsilon $. Given $x,y\in X$ such that $\|x\|=\|y\|=t$, we see that any arc $\alpha$ on X, connecting $x$ to $y$ such that $length(\alpha)\leq d_X(x,y)+\varepsilon \|x-y\|$, is contained in the compact Euclidean ball $B[0,Kt]$. In fact, according to the Figure \ref{curve}, any arc $\alpha$ connecting $x$ to $y$, which is not contained in the ball $B[0,Kt]$, has length at least $2(K-1)t$, in particular, $length(\alpha)>(\lambda+\varepsilon )\| x- y \|$.

\begin{center}\mlabel{curve}{1}
\includegraphics[height=8cm]{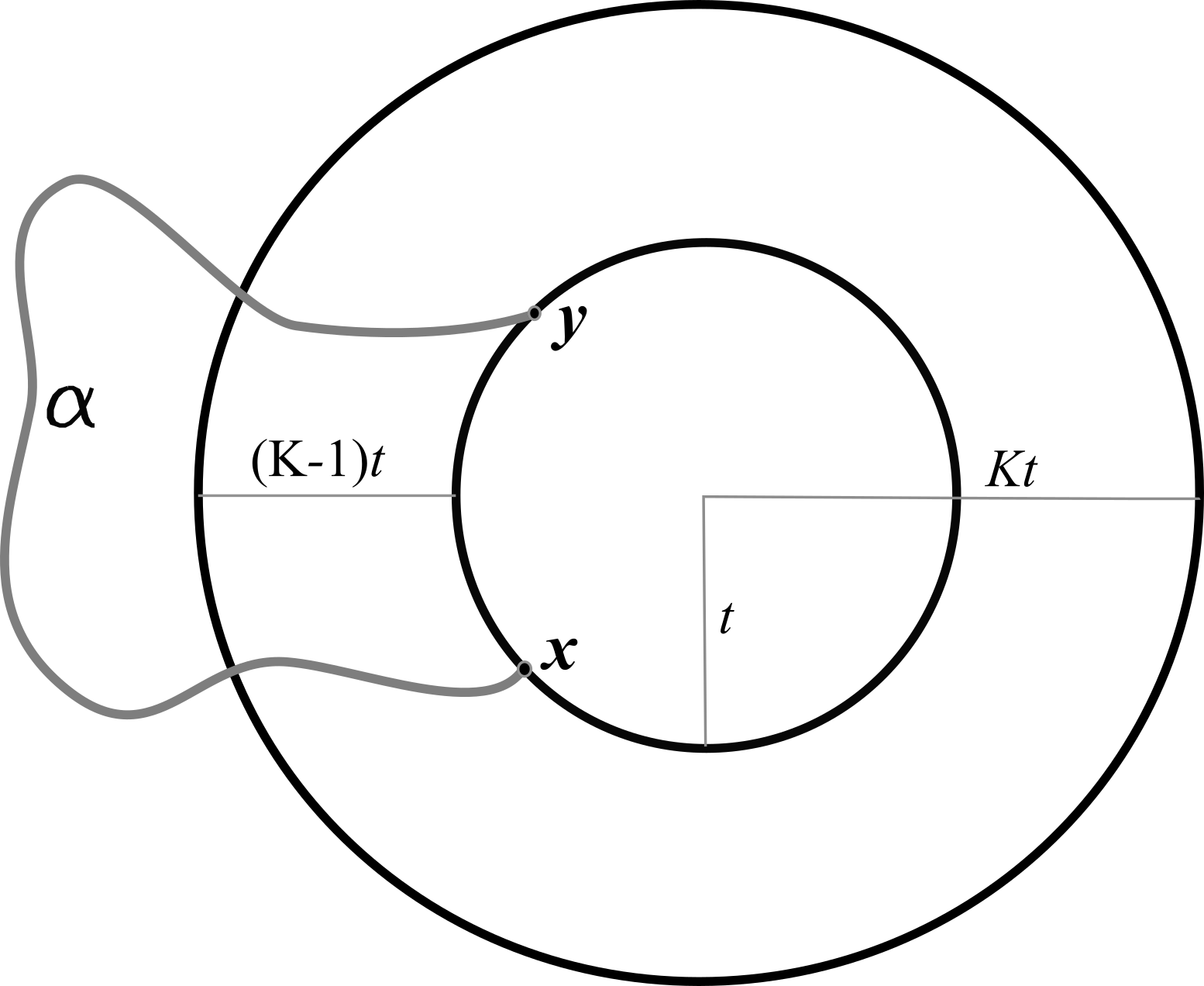}\\
{\bf Figure 1.}
\end{center}

Let $v,w\in T_0X$ be tangent vectors such that $\|v\|=\|w\|=1$. We claim that $d_{T_0X}(v,w)\leq \lambda \|v-w\|$. In fact, let $x_n$ and $y_n$ be sequences of points in $X$ such that $\|x_n\|=\|y_n\|=t_n\to 0$ and $$\frac{1}{t_n}x_n\to v \quad \quad  \mbox{and} \quad \quad \frac{1}{t_n}y_n\to w.$$ For each $n$, let $\gamma_n\colon[0,1]\rightarrow X$ be an arc on $X$ connecting $x_n$ to $y_n$ satisfying $length(\gamma_n)\leq d_X(x_n,y_n)+\varepsilon \|x_n-y_n\|$. Let us define $\alpha_n\colon[0,1]\rightarrow \R^m$ by $t_n\alpha_n=\gamma_n$. We have seen that $\alpha_n$ is contained in the compact set $B[0,kt_n]$, where $k$ is any constant greater than $\lambda+1+\varepsilon $. Moreover,
\begin{eqnarray*}
\ell_n:=length(\alpha_n)&=&\frac{1}{t_n}length(\gamma_n) \\
&\leq & \frac{1}{t_n}d_X(x_n,y_n) +\frac{\varepsilon}{t_n}\|x_n-y_n\|\\
&\leq &\frac{\lambda+\varepsilon}{t_n} \|x_n-y_n\|.
\end{eqnarray*}
Since $\displaystyle\frac{1}{t_n} \| x_n-y_n\| \to \| v-w\|$, we get that $\{\ell_n\}_{n\in \N}$ is bounded. Thus, by taking a subsequence if necessary, we can suppose that $\ell_n\leq \ell:=(\lambda+\varepsilon) \|v-w\|+1$ for all $n\in \N$. For each $n$, let $\widetilde \alpha_n:[0,\ell_n]\to X$ be the re-parametrization by arc length of $\alpha_n$ and let $\beta_n:[0,\ell]\to X$ be the curve given by $\beta_n(t)=\widetilde\alpha_n(t)$ if $t\in [0,\ell_n]$ and $\beta_n(t)=\widetilde \alpha_n(\ell_n)$ if $t\in [\ell_n, \ell]$. Since each $\widetilde \alpha_n$ is parametrized by arc length, for each $n\in \N$ we get $\|\beta_n(s)-\beta_n(t)\|\leq |s-t|$, for all $s,t\in[0,\ell]$. Then, $\{\beta_n\}$ is a equicontinuous and bounded family. Therefore, by using Arzel\`a-Ascoli Theorem (see, for example, Theorem 45.4 in \cite{Munkres:2014}), there exists a continuous arc $\alpha\colon [0,\ell]\rightarrow T_0X$ for which $\beta_n$ converges uniformly, up to subsequence; let us say $\beta_{n_k}\rightrightarrows\alpha$. 

Now, let us estimate length of $\alpha$. Since $\varepsilon >0$, there is a positive integer number $k_0$ such that, for any $k>k_0$, $$ length(\beta_{n_k})\leq (\lambda+\epsilon)\|v-w\|+\epsilon, $$ hence $length(\alpha)$ is at most $(\lambda+\epsilon)\| v-w\|+\epsilon$ and then $d_{T_0X}(v,w)\leq (\lambda+\epsilon)\|v-w\|+\epsilon.$ Since $\epsilon$ was arbitrarily chosen, we conclude that $d_{T_0X}(v,w)$ is at most $\lambda \| v-w\|$.

Finally, we are going to prove that for any pair of vectors $x,y\in T_0X$, their inner distance into the cone $T_0X$ is at most $(1+\lambda)\| x- y\|$. In fact, denote by $d_{T_0X}$ the inner distance of $T_0X$. If $x=sy$ for some $s\in \R$, then the line segment connecting $x$ and $y$ is a subset of ${T_0X}$. In this case, $d_{T_0X}(x,y)=\|x-y\| \leq (1+\lambda)\| x- y\|$. Hence, we can suppose that $x\not=sy$ for all $s\in \R$. In particular, $x\not =0$ and $y\not =0$. Moreover, if $y'=ty$, where $t=\frac{\|x\|}{\|y\|}$, we can see as in the Figure \ref{angle} below that the angle $\alpha$ is greater than $\frac{\pi}{2}$. Then, $\|x-y\|\geq \|x-y'\|$ and $\|x-y\|\geq \|y'-y\|$.
\begin{center}\mlabel{angle}{2}
\includegraphics[height=8cm]{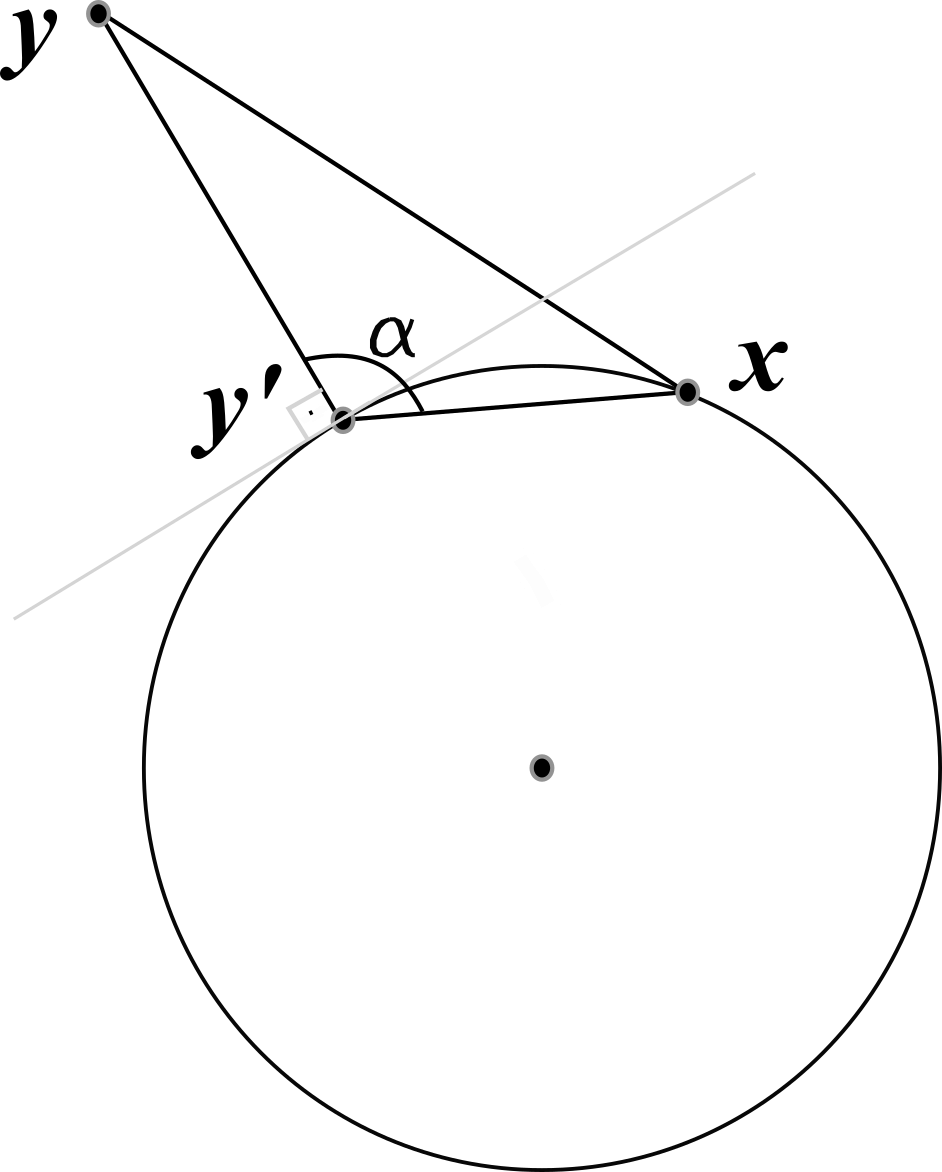}\\
{\bf Figure 2.}
\end{center}
Therefore,
\begin{eqnarray*}
(1+\lambda)\| x - y\|& = & \| x - y\| +\lambda \| x - y\| \\
&\geq& \| y'- y \| + \lambda \| x - y'\| \\
&\geq& d_{T_0X}(y,y') + d_{T_0X}(x,y') \\
&\geq& d_{T_0X}(x,y).
\end{eqnarray*}
This finishes the proof.
\end{proof}
According to the next example, the converse of the Theorem \ref{ne_set_ne_cone} does not hold true.
\begin{example}
 Let $X=\{(x,y)\in \R^2;\, y^2=x^3\}$. Then $T_0X=\{(x,0)\in\R^2;\, x\geq 0\}$ is a Lipschitz normally embedded set at $0$. However, $X$ is not a Lipschitz normally embedded set at $0$.
\end{example}

Next we have an example where we show that the set is not Lipschitz normally embedded at the origin $0\in\R^4$ by applying the Theorem \ref{ne_set_ne_cone}.

\begin{example}\label{2.8}
 Let $$X=\{(x,y,z,t)\in \R^4; \ ((x-t)^2+y^2+z^2-t^2)\cdot ((x+t)^2+y^2+z^2-t^2)=t^{10}, \ t\geq 0\}.$$
\begin{center}\mlabel{X_double_spheres}{3}
\includegraphics[height=8cm]{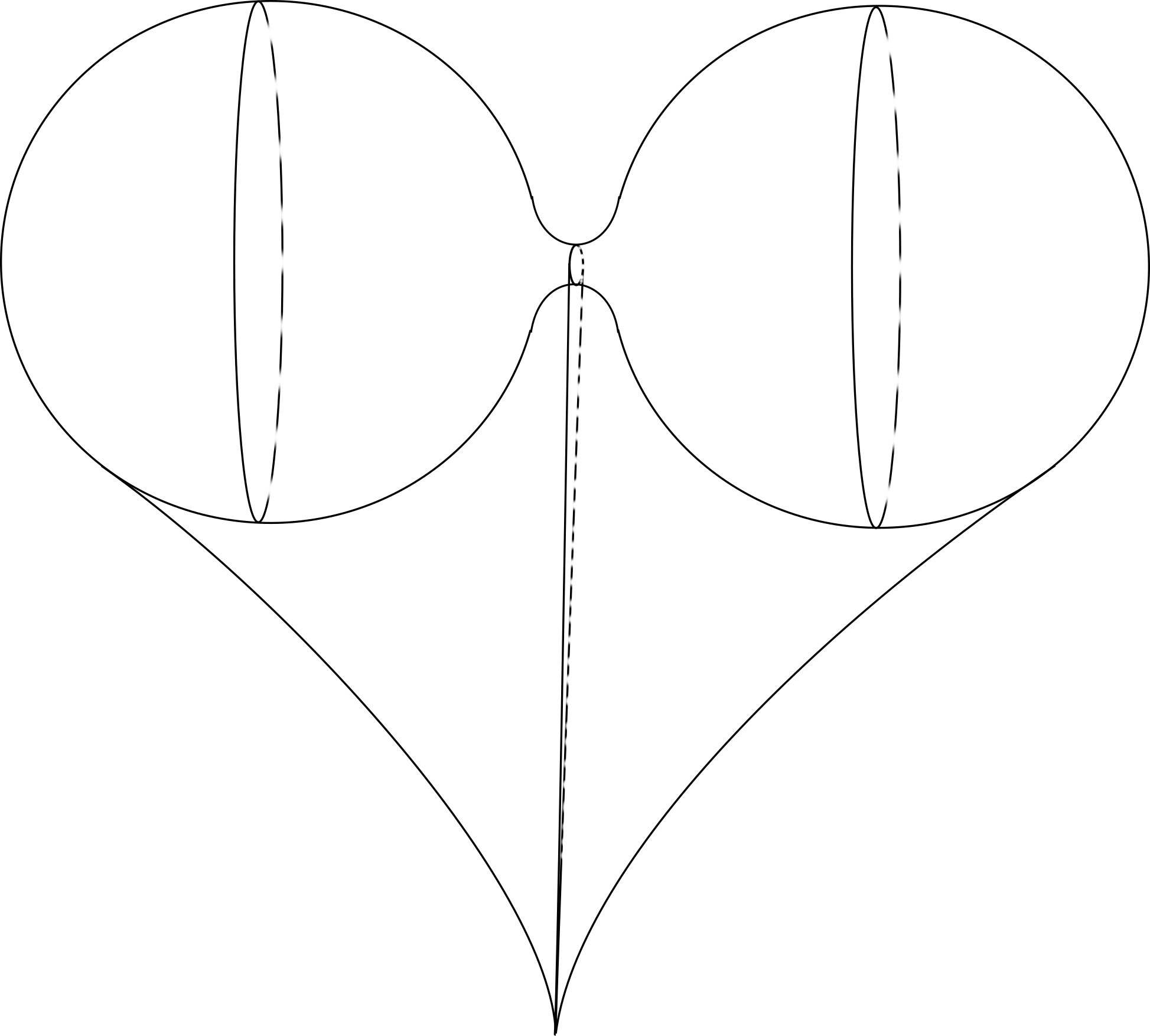}\\
{\bf Figure 3.}
\end{center}
In this case, $T_0X\subset\R^4$ is the cone over the following spheres
$$M=\{ (x,y,z,1)\in\R^4 \ (x-1)^2+y^2+z^2=1\} \cup \{ (x,y,z,1)\in\R^4 \ (x+1)^2+y^2+z^2=1\},$$ which is not Lipschitz normally embedded at $0$.
\begin{center}\mlabel{double_spheres}{4}
\includegraphics[height=7cm]{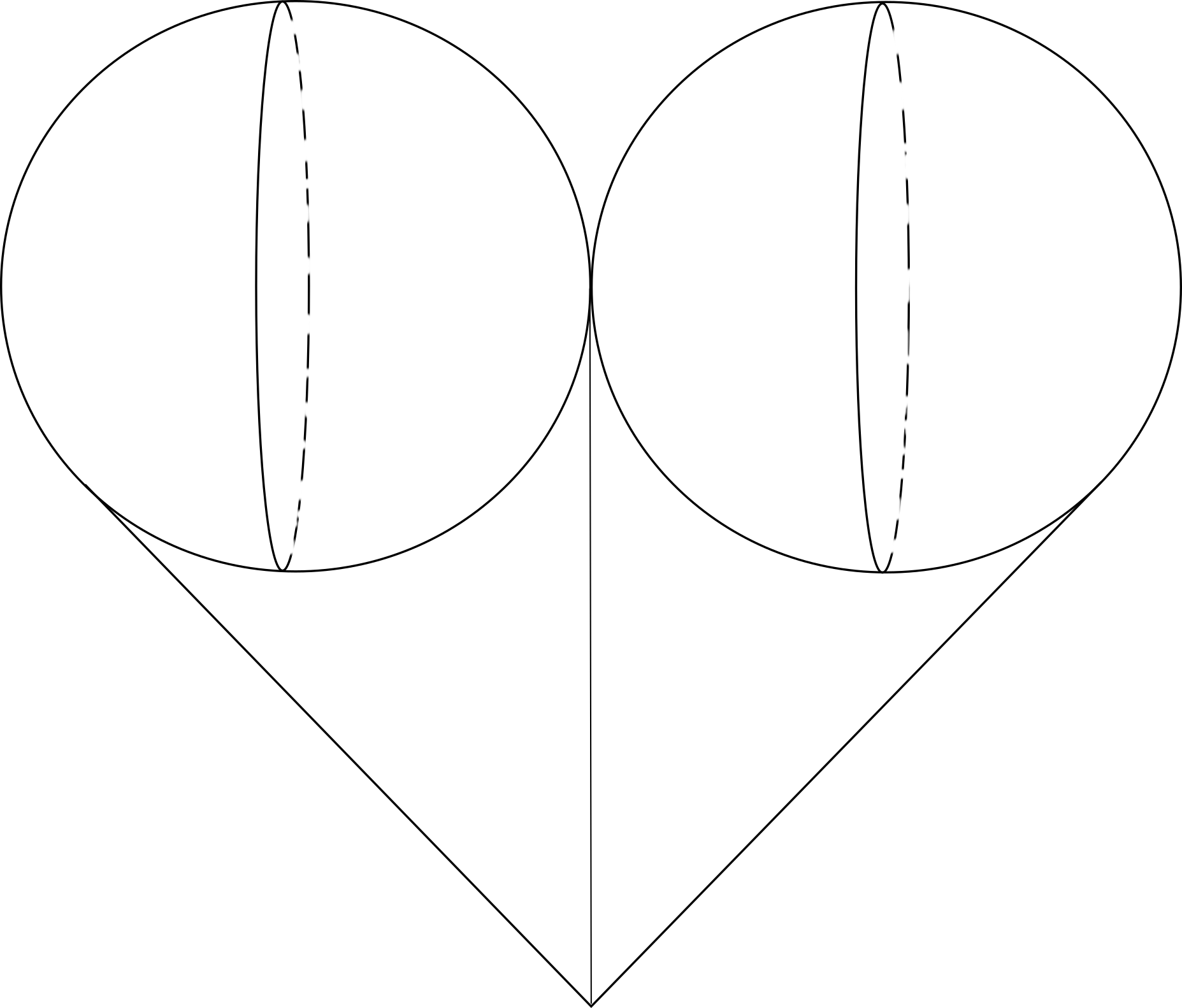}\\
{\bf Figure 4.}
\end{center}
\end{example}
In order to show that this cone is not Lipschitz normally embedded, by Proposition 2.8(b) in \cite{KernerPR:2017}, it is enough to show that $M$ is not Lipschitz normally embedded. Thus, if $M$ is Lipschitz normally embedded, then there is a constant $C\geq 1$ such that $d_M(x,y)\leq C\|x-y\|$ for all $x,y\in M$. Let $\alpha, \beta:(0,1)\to M$ be two curves given by $\alpha(s)=(s,(1-(1-s)^2)^{\frac{1}{2}},0,1)$ and $\beta(t)=(-s,(1-(1-s)^2)^{\frac{1}{2}},0,1)$. 
Then,
$$
\|\alpha(s)-\beta(s)\|=2s
$$
and
$$
d_X(\alpha(s),\beta(s))\geq 2(1-(1-s)^2)^{\frac{1}{2}}=2(2s-s^2)^{\frac{1}{2}}.
$$
Therefore, 
$$
C\geq (\frac{2}{s}-1)^{\frac{1}{2}}, \forall s\in (0,1)
$$
which is a contradiction, since the right hand side of this inequality is unbounded.

\section{On reduced tangent cones}

In this subsection, we closely follow definitions from the paper \cite{BirbrairFG:2017}.

\begin{definition}
Let $X\subset \R^m$ be a subanalytic subset such that $0\in X$. We say that $x\in\partial X'$ is {\bf simple point of $\partial X'$}, if there is an open $U\subset \R^{m+1}$ with $x\in U$ such that:
\begin{itemize}
\item [a)] the connected components of $(X'\cap U)\setminus \partial X'$, say $X_1,..., X_r$, are topological manifolds with $\dim X_i=\dim X$, for all $i=1,...,r$;
\item [b)] $(X_i\cup \partial X')\cap U$ is a topological manifold with boundary, for all $i=1,...,r$;.
\end{itemize}
Let $Smp(\partial X')$ denote the set of all simple points of $\partial X'$.
\end{definition}
\begin{remark}
{\rm By using Theorem 2.2 proved in \cite{Pawlucki:1985}, we get that $Smp(\partial X')$ is dense in $\mathbb{S}_0X\times \{0\}$ if $\dim \mathbb{S}_0X=\dim X-1$ and $X$ has pure dimension.}
\end{remark}
\begin{definition}
Let $X\subset \R^m$ be a subanalytic set such that $0\in X$.
We define $k_X:Smp(\partial X')\to \N$ by: $k_X(x)$ is the number of components of $\rho_m^{-1}(X\setminus\{0\})\cap U$, where $U$ is a sufficiently small open neighborhood of $x$.
\end{definition}

\begin{remark}
{\rm It is clear that the function $k_X$ is locally constant, hence $k_X$ is constant on each connected component $C_j$ of $Smp(\partial X')$. This is why we are allowed to set $k_X(C_j):=k_X(x)$ for any $x\in C_j$.}
\end{remark}
\begin{remark}
{\rm In the case that $X\subset \C^n$ is a complex analytic subset, let $X_1,...,X_r$ be the irreducible components of the tangent cone $T_0X$. Then there is a complex analytic subset $\sigma\subset \C^n$ with $\dim \sigma <\dim X$, such that for each $i=1,...,r$, $X_i\setminus \sigma$ intersect only one connected component $C_j$ (see \cite{Chirka:1989}, pp. 132-133). In this case, we define $k_X(X_i):=k_X(C_j)$.}
\end{remark}
\begin{remark}
{\rm The number $k_X(C_j)$ is equal to $n_j$ defined in \cite{Kurdyka:1989}, pp. 762, and also it is equal to $k_j$ defined in \cite{Chirka:1989}, pp. 132-133, in the case where $X$ is a complex analytic set.}
\end{remark}

Let $X\subset \C^m$ be a complex analytic subset with $0\in X$. Let $\mathcal{I}(X)$ be the ideal of $\mathcal{O}_m$ given by the germs which vanishes on $X$ and $\mathcal{I}^*(X)$ the ideal of the initial parts of $\mathcal{I}(X)$ (see Remark \ref{remark-tangent-cone}). We say that $X$ has {\bf reduced tangent cone} at $0$, if the ring $\mathcal{O}_m/\mathcal{I}^*(X)$ is reduced in the sense that it does not have nonzero nilpotent elements.

 \begin{remark}
 {\rm Notice that the notion of reduced tangent cone defined above is not an absolute concept. Indeed, the complex line
$$\{(x,y)\in\C^2 \ | \ y=0\}$$ is the tangent cone at the origin of the cusp and the parabola below
$$\{(x,y)\in\C^2 \ | \ y^2=x^3\} \ \mbox{and} \  \{(x,y)\in\C^2 \ | \ y=x^2\}.$$ 
This line as the tangent cone of the parabola at the origin is reduced but as the tangent cone of the cusp is not.}
 \end{remark}

\begin{remark}
{\rm Let $X\subset \C^m$ be a complex analytic subset with $0\in X$ and let $X_1,...,X_r$ be the irreducible components of $T_0X$. Using that the multiplicity of $X$ at $0$ is equal to the density of $X$ at $0$ (see the Theorem 7.3 in \cite{Draper:1969}) jointly with Th\'eor\`em 3.8 in \cite{Kurdyka:1989}, we get
$$
m(X,0)=\sum \limits_{i=1}^rk_X(X_i)m(X_i,0).
$$
Thus, $X$ has reduced tangent cone if and only if $k_X(X_i)=1$, $i=1,...,r$ (see Appendix E in \cite{Gau-Lipman:1983}).}
\end{remark}

The last remark allow us to state the following extension of the notion of reduced tangent cone for subanalytic sets.
\begin{definition}
 Let $X$ be a subanalytic subset in $\R^m$; $0\in X$.  We say that  $X$ has {\bf reduced tangent cone} at $0$, if $k_X(x)=1$ for all $x\in Smp(\partial X')$.
\end{definition}

\begin{theorem}
\label{multiplicities}
 Let $X$ be a subanalytic subset in $\R^m$; $0\in X$. If $X$ is a Lipschitz normally embedded set at $0$, then $X$ has reduced tangent cone at the origin.
\end{theorem}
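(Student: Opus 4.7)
The plan is to argue by contradiction. Suppose some $x_0=(v,0)\in Smp(\partial X')$ satisfies $k_X(x_0)\ge 2$. By the definition of simple point there is an open neighborhood $U$ of $x_0$ in $\mathbb{S}^{m-1}\times[0,+\infty)$ such that $(X'\cap U)\setminus\partial X'$ has at least two connected components, say $X_1$ and $X_2$, each a topological manifold of dimension $\dim X$, and with $(X_i\cup\partial X')\cap U$ a manifold-with-boundary whose boundary part lies in $\partial X'$.

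First I would construct, for all sufficiently small $r>0$, a pair of points $(u,r)\in X_1$ and $(w,r)\in X_2$ with $u$ and $w$ close to $v$. Using the manifold-with-boundary structure of each $X_i\cup\partial X'$ at $x_0$, pick a continuous arc $\gamma_i\colon[0,\varepsilon)\to X_i\cup\partial X'$ with $\gamma_i(0)=x_0$ and $\gamma_i((0,\varepsilon))\subset X_i$. Writing $\gamma_i(t)=(u_i(t),s_i(t))$, the radial component $s_i$ is continuous with $s_i(0)=0$ and $s_i(t)>0$ for $t>0$, so by the intermediate value theorem (and choosing, say, the infimum of the attaining parameters) every sufficiently small positive value is attained at some $t$ arbitrarily close to $0$. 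Given any sequence $r_n\to 0^+$ we therefore obtain $p_n=(u_n,r_n)\in X_1$ and $q_n=(w_n,r_n)\in X_2$, and continuity of $u_i$ forces $u_n,w_n\to v$, hence $\|u_n-w_n\|\to 0$.

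Next I would invoke the LNE hypothesis. If $\lambda$ is an LNE constant for $X$, then
$$d_X(\rho_m(p_n),\rho_m(q_n))\le\lambda\|\rho_m(p_n)-\rho_m(q_n)\|=\lambda\,r_n\|u_n-w_n\|,$$
so there is an arc $\alpha_n\subset X$ from $\rho_m(p_n)$ to $\rho_m(q_n)$ of length $L_n\le 2\lambda\,r_n\|u_n-w_n\|$. Because $L_n/r_n=2\lambda\|u_n-w_n\|\to 0$, the arc $\alpha_n$ stays bounded away from $0$ and lifts via $\rho_m^{-1}$ to an arc $\widetilde\alpha_n$ in $X'\setminus\partial X'$. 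A short computation shows that every $y$ on $\alpha_n$ satisfies $|\,\|y\|-r_n|\le L_n$ and $\|y/\|y\|-u_n\|\le 2L_n/(r_n-L_n)$, both of which tend to $0$ uniformly along $\alpha_n$. Combined with $u_n\to v$, this traps $\widetilde\alpha_n$ inside $U$ for $n$ large. But $\widetilde\alpha_n$ is contained in $(X'\cap U)\setminus\partial X'$ and joins $p_n\in X_1$ to $q_n\in X_2$, contradicting that $X_1$ and $X_2$ are distinct connected components.

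The main obstacle is the coordination between the LNE bound on the outer distance and the blow-up coordinates: one must verify that the length constraint on $\alpha_n$ keeps both its radial and angular excursions small enough for the lift $\widetilde\alpha_n$ to remain inside the prescribed simple-point neighborhood $U$. Once this confinement is secured the contradiction is immediate, and since $x_0\in Smp(\partial X')$ was arbitrary we conclude that $k_X\equiv 1$ on $Smp(\partial X')$, that is, $X$ has reduced tangent cone at the origin.
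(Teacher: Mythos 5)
Your proof is correct and follows essentially the same strategy as the paper's: near a simple point with $k_X\geq 2$, the Lipschitz normal embedding forces a connecting arc of length $o(r_n)$, which is incompatible with the two branches lying in distinct connected components near that point. The only difference is presentational — the paper derives the contradiction downstairs as a length lower bound (any arc joining the two components must exit a conical neighborhood $C_{\delta,\varepsilon}$, so has length at least $\varepsilon t_n$), whereas you lift the short arc to the spherical blow-up and contradict connectivity directly; these are the same estimate arranged contrapositively.
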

\begin{proof}

Suppose that there is $x=(x',0)\in Smp(\partial X')=\mathbb{S}_0X\times \{0\}$ such that $k_X(x)\geq 2$. Then, there are $\delta,\varepsilon>0$ such that $k_X(y)=k_X(x)=k$ for all $y\in \partial X'\cap B_{\varepsilon}(x)$ and $X'\cap U_{\delta,\varepsilon}\setminus \partial X'$ has $k$ connected components, where
$$
U_{\delta,\varepsilon}=\{(w,s)\in \mathbb{S}^{m-1}\times [0,+\infty); \|w-x'\|<\varepsilon\mbox{ and }0\leq s<\delta\}.
$$
Let $X_1$ and $X_2$ be two connected components of $X'\cap U_{\delta,\varepsilon}\setminus \partial X'$ and $C_{\delta,\varepsilon}=\{v\in\R^m\setminus\{0\};\, \|v-sx'\|<s\varepsilon \mbox{ and }0<s<\delta\}$.

For each $n\in \N$ we can take $x_n\in \rho(X_1)$ and $y_n\in \rho(X_2)$ such that $\|x_n\|=\|y_n\|=t_n$ and $\lim \frac{x_n}{t_n}=\lim\frac{y_n}{t_n}=x'$. Moreover, taking subsequence, if necessary, we can assume that $x_n, y_n\in C_{\delta,\frac{\varepsilon}{2}}$. Thus, if $\beta_n:[0,1]\to X$ is a curve connecting $x_n$ to $y_n$, we have that there exists a $t_0\in [0,1]$ such that $\beta_n(t_0)\not \in C_{\delta,\varepsilon}$, since $x_n$ and $y_n$ belong to different connected components of $X\cap C_{\delta,\varepsilon}$. Therefore, $length(\beta_n)\geq \varepsilon t_n$ and, then, $d_X(x_n,y_n)\geq \varepsilon t_n$.

On the other hand, $X$ is a Lipschitz normally embedded set, then there is a constant $C>0$ such that $d_X(v,w)\leq C\|v-w\|$, for all $v,w\in X$. Therefore, $C\|\frac{x_n}{t_n}-\frac{y_n}{t_n}\|\geq \varepsilon,$ for all $n\in \N$
and this is a contradiction, since $\lim\frac{x_n}{t_n}=\lim\frac{y_n}{t_n}$.
\end{proof}
For subanalytic sets, we can sum up the Theorems \ref{ne_set_ne_cone} and \ref{multiplicities} as the following
\begin{corollary}\label{red-nn}
 Let $X\subset\R^m$ be a subanalytic set; $0\in X$. If $X$ is a Lipschitz normally embedded set at $0$, then $X$ has reduced tangent cone at $0$ and $T_0X$ is Lipschitz normally embedded.
\end{corollary}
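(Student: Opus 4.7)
The plan is to deduce the corollary directly from the two theorems already proved, with only a small amount of plumbing to reconcile the \emph{local} hypothesis (LNE at $0$) with the \emph{global} hypothesis in the statements of Theorems \ref{ne_set_ne_cone} and \ref{multiplicities}. By Definition \ref{definition-ne}, since $X$ is Lipschitz normally embedded at $0$, there exists an open neighborhood $U \subset \R^m$ of $0$ such that $Y := X \cap U$ is Lipschitz normally embedded (in the global sense). Since tangent cones are defined in terms of sequences converging to $0$, we have $T_0Y = T_0X$ and also $\partial Y' = \partial X'$ and $Smp(\partial Y') = Smp(\partial X')$ as germs at $0$, with $k_Y = k_X$.

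Next, I would verify that the hypothesis of Theorem \ref{ne_set_ne_cone} is met, namely that $Y$ has a \emph{unique} tangent cone at $0$. This is exactly what the remark following the definition of the spherical blow-up asserts for subanalytic sets: any subanalytic set $Y \subset \R^m$ containing $0$ has a unique tangent cone at $0$, and $\partial Y' = \mathbb{S}_0 Y \times \{0\}$. Since $X$ is subanalytic and $U$ is open (and $Y = X\cap U$ is therefore subanalytic), this applies to $Y$.

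With uniqueness of the tangent cone in hand, Theorem \ref{ne_set_ne_cone} applied to $Y$ yields that $T_0 Y = T_0 X$ is Lipschitz normally embedded. Likewise, Theorem \ref{multiplicities} applied to $Y$ (a subanalytic, LNE set containing $0$) yields that $k_Y(x) = 1$ for every $x \in Smp(\partial Y')$; since $k_Y = k_X$ and $Smp(\partial Y') = Smp(\partial X')$, this gives that $X$ has reduced tangent cone at $0$ in the sense of the definition preceding the corollary.

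I do not expect any real obstacle here: the substance of the corollary is already contained in the two theorems, and the only thing to watch is the passage from the local hypothesis ``LNE at $0$'' to the global hypothesis ``LNE'' used in the theorem statements, which is handled by restricting to a neighborhood on which the Lipschitz constant exists and noting that tangent cones, $\partial X'$, $Smp$ and $k_X$ are all germ invariants at $0$.
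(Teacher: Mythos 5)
Your proposal is correct and follows exactly the route the paper intends: the corollary is stated there simply as a summing-up of Theorems \ref{ne_set_ne_cone} and \ref{multiplicities}, with no further argument given. The localization step you add (replacing $X$ by $X\cap U$ and noting that $T_0$, $\partial X'$, $Smp(\partial X')$ and $k_X$ are germ invariants, and that subanalytic sets have a unique tangent cone) is exactly the plumbing the paper leaves implicit.
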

As we can see in the next example, the converse of Corollary \ref{red-nn} is not true in general.
\begin{example}\label{3.12}
 Let $$X=\{(x,y,z)\in \R^3; \ (x^2+y^2-z^2)\cdot (x^2+(|y|-z-z^3)^2-z^6)=0, \ z\geq 0\}.$$
\begin{center}\mlabel{X_nne}{5}
\includegraphics[height=6cm]{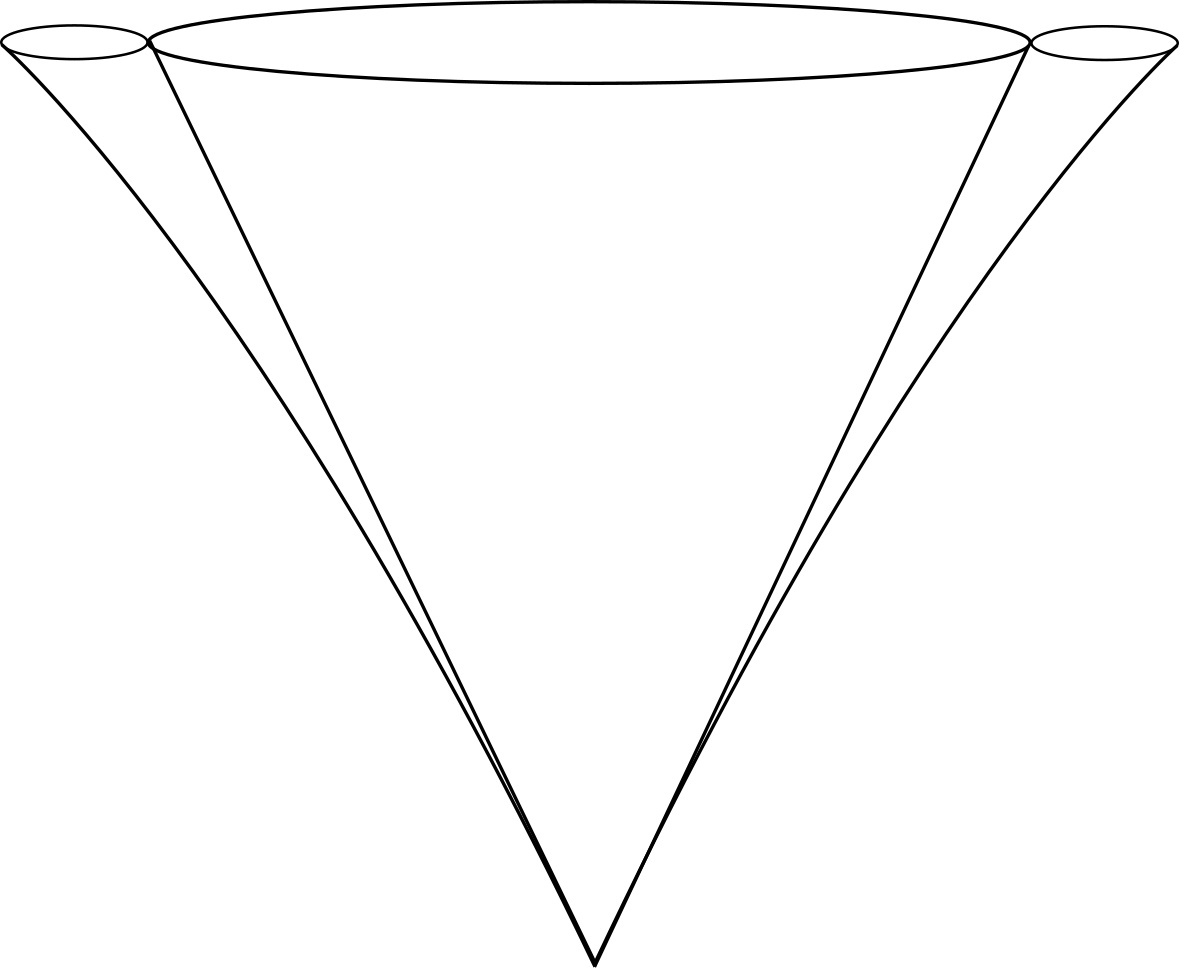}\\
{\bf Figure 5.}
\end{center}
In this case, $T_0X\subset\R^3$ is the cone
$$\{ (x,y,z)\in\R^3 \ x^2+y^2=z^2,\,\, z\geq 0\}$$
which is Lipschitz normally embedded at $0$ and $X$ has reduced tangent cone at $0$.
\begin{center}\mlabel{TX_rne}{6}
\includegraphics[height=5cm]{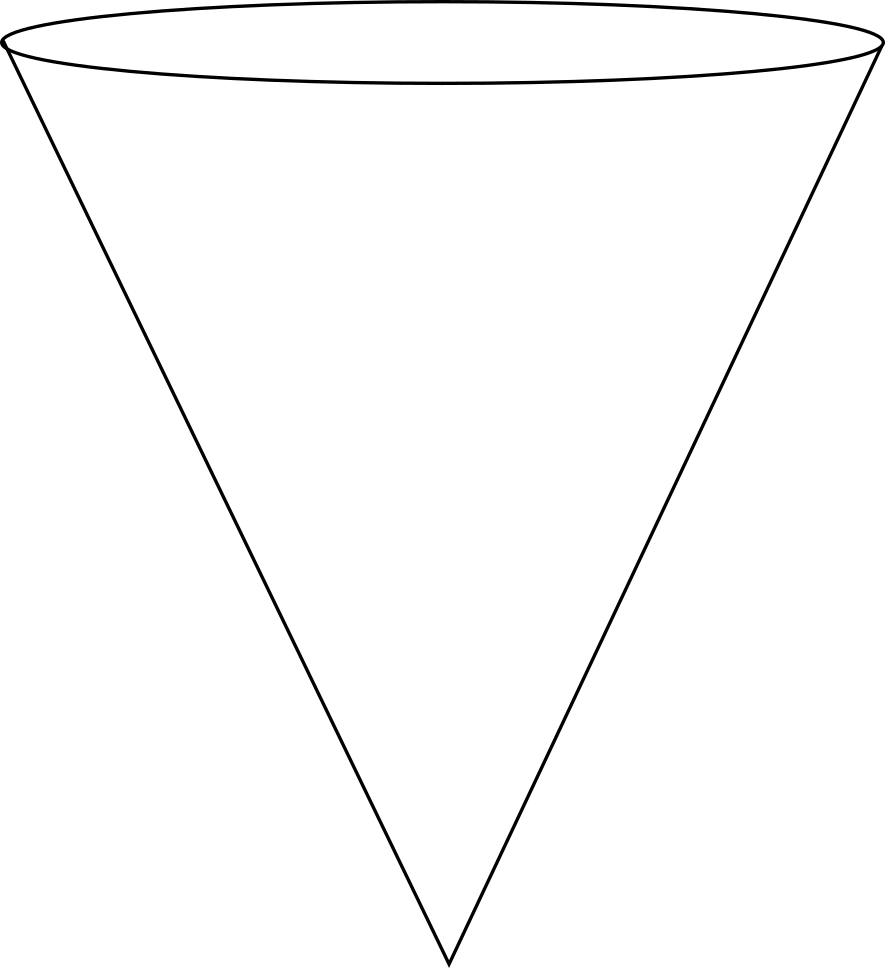}\\
{\bf Figure 6.}
\end{center}
\end{example}

Suppose that there is a constant $C\geq 1$ such that $d_X(x,y)\leq C\|x-y\|$ for all $x,y\in X$. Thus, for $r\in (0,1)$ fixed, let $\alpha, \beta:(0,r^3)\to X$ be two curves given by $\alpha(t)=(t,(r^2-t^2)^{\frac{1}{2}},r)$ and $\beta(t)=(t,r+r^3-(r^6-t^2)^{\frac{1}{2}},r)$. 
Then,
$$
\|\alpha(t)-\beta(t)\|=r+r^3-(r^6-t^2)^{\frac{1}{2}}-(r^2-t^2)^{\frac{1}{2}}
$$
and
$$
d_X(\alpha(t),\beta(t))\geq t.
$$
Therefore, 
$$
1\leq C\frac{r^3-(r^6-t^2)^{\frac{1}{2}}}{t}+C\frac{r-(r^2-t^2)^{\frac{1}{2}}}{t}, \forall t\in (0,r^3),
$$
which is a contradiction, since the right side of this inequality tends to zero when $t\to 0^+$.

As a consequence of Corollary \ref{red-nn}, we get a positive answer to the question of A. Pichon and W. Neumann quoted in the introduction of this work.
\begin{corollary}
 Let $X\subset\C^m$ be a complex analytic set; $0\in X$. If $X$ is a Lipschitz normally embedded set at $0$, then $X$ has reduced tangent cone at $0$ and $T_0X$ is Lipschitz normally embedded.
\end{corollary}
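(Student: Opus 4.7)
The plan is simply to reduce the complex-analytic corollary to the subanalytic Corollary \ref{red-nn}, which has already been proved. The two ingredients I need are (i) that every complex analytic subset of $\C^m\simeq\R^{2m}$ is subanalytic, and (ii) that for a complex analytic set the two notions of ``reduced tangent cone'' in play, namely the algebraic one via $\mathcal{O}_m/\mathcal{I}^{*}(X)$ and the subanalytic one via $k_X\equiv 1$, coincide.

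First I would invoke Corollary \ref{red-nn}: since $X$ is complex analytic it is in particular a subanalytic subset of $\R^{2m}$, and since it is Lipschitz normally embedded at $0$, the corollary gives that $k_X(x)=1$ for every $x\in Smp(\partial X')$ and that $T_0X$ is Lipschitz normally embedded. This takes care of the Lipschitz normal embedding of the tangent cone with no further work.

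Next I would translate $k_X\equiv 1$ into the algebraic reducedness statement. Let $X_1,\ldots,X_r$ be the irreducible components of $T_0X$. By the remark preceding the definition of reduced tangent cone for subanalytic sets, each $X_i$ meets only one connected component $C_j$ of $Smp(\partial X')$ off a thin complex analytic set, so $k_X(X_i)=k_X(C_j)=1$ for every $i$. Combined with the multiplicity formula
\[
m(X,0)=\sum_{i=1}^{r}k_X(X_i)\,m(X_i,0)
\]
recorded in the excerpt, one has $m(X,0)=\sum_i m(X_i,0)$, which by the cited Appendix E of \cite{Gau-Lipman:1983} is equivalent to $\mathcal{O}_m/\mathcal{I}^{*}(X)$ being reduced. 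Hence $X$ has reduced tangent cone at $0$ in the complex analytic sense.

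There is essentially no obstacle here beyond bookkeeping: all the hard geometric content — the Arzelà–Ascoli argument giving Lipschitz normal embedding of $T_0X$, and the separation-of-components argument giving $k_X\equiv 1$ — is already packaged in Theorem \ref{ne_set_ne_cone} and Theorem \ref{multiplicities}. The only point requiring a little care is making sure that the subanalytic notion of reducedness really does imply the algebraic one in the complex setting, which is exactly the content of the multiplicity formula together with the Gau–Lipman characterization.
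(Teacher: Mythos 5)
Your proposal is correct and follows exactly the route the paper intends: the corollary is stated there as an immediate consequence of Corollary \ref{red-nn}, with the identification of the subanalytic condition $k_X\equiv 1$ and the algebraic reducedness of $\mathcal{O}_m/\mathcal{I}^{*}(X)$ having already been established in the remarks (via the multiplicity formula of Draper and Kurdyka--Raby together with Appendix E of Gau--Lipman). Your write-up simply makes that bookkeeping explicit, which is fine.
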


In the following, we give one example of complex surface $X \in \C^3$, with non-isolated singularity at $0$, such that $X$ has reduced tangent cone at $0$, its  tangent cone is normally embedded while X is not normally embedded at $0$. It shows that the converse statement of the above corollary does not hold true.

\begin{example}\label{3.14}
 Let $$X=\{(x,y,z)\in \C^3; \ y\cdot (x^2+(y-z^2)^2-z^4)=0\}.$$
In this case, $T_0X\subset\R^3$ is the cone
$$\{ (x,y,z)\in\C^3 \ y\cdot(x^2+y^2)=0\}$$
which is Lipschitz normally embedded at $0$ and $X$ has reduced tangent cone at $0$.
\end{example}

Suppose that there is a constant $C\geq 1$ such that $d_X(x,y)\leq C\|x-y\|$ for all $x,y\in X$. Thus, for $r\in (0,1)$ fixed, let $\alpha, \beta:(0,r^2)\to X$ be two curves given by $\alpha(t)=(t,0,r)$ and $\beta(t)=(t,r^2-(r^4-t^2)^{\frac{1}{2}},r)$. 
Then,
$$
\|\alpha(t)-\beta(t)\|=r^2-(r^4-t^2)^{\frac{1}{2}}
$$
and
$$
d_X(\alpha(t),\beta(t))\geq t.
$$
Therefore, 
$$
1\leq C\frac{r^2-(r^4-t^2)^{\frac{1}{2}}}{t}, \forall t\in (0,r^2),
$$
which is a contraction, since the right side of the above inequality tends to zero when $t\to 0^+$.

\begin{remark}
{\rm  The converse statement of the corollary above admits counterexamples even with isolated singularities. A. Pichon, in her talk at the S\~ao-Carlos Workshop on Real and Complex Singularities in 2016, gave one. The mentioned example by A. Pichon is the following: consider $X \in \C^3$ given by  equation $y^4+z^4+x^2(y+2z)(z+2y)^2+(x+y+z)^{11} = 0.$ This complex surface has reduced tangent cone at $0$, its  tangent cone is Lipschitz normally embedded while X is not Lipschitz normally embedded at $0$. See the Appendix to this paper.}
\end{remark}

Finally, we would like to observe that the results above hold true if we assume that the set $X$ is definable in a polynomial bounded structure on $\R$, instead  of the assumption that $X$ is a subanalytic set. In order to know details about definable sets in polynomially bounded o-minimal structures on $\R$, see, for example, \cite{Dries:1998}.

\noindent{\bf Acknowledgements}.  We are thankful to Anne Pichon and Walter Neumann for valuable discussions on this work. We would like to thank Lev Birbrair to his interest and some conversations about this subject as well. Finally, we would like to thank the anonymous referee for an accurate reading of the paper and essential suggestions which have improved the presentation of this paper.

\section{Appendix: A non normally embedded complex normal surface with normally embedded reduced tangent cone. By Anne Pichon and Walter D Neumann}

In this paper, A. Fernandes and E. Sampaio prove (Corollary
\ref{red-nn}) that if $(X,0) \subset (\mathbb R^m,0)$ is a subanalytic germ which is
Lipschitz normally embedded, then the two following conditions
hold:
\begin{itemize}
\item[(1)]the tangent cone $T_0X$ is reduced;
\item[(2)]$T_0X$ is Lipschitz normally embedded.
\end{itemize}

The following proposition gives an extra necessary condition $(3)$ for
normal embedding which is automatically satisfied when $(X,0)$ has an
isolated singularity.

Fix $\epsilon>0$ and consider a Lipschitz stratification  
$X \cap B_{\epsilon} = \coprod_{i \in A} S_{i}$    (see \cite{Mostowski,P}), so
$A$ is finite and for all $i \in A$ and all $y \in S_{i}$, the
bilipschitz type of $(X,y)$ does not depend on the choice of $y$ in
$S_{i}$. If $\epsilon >0$ is sufficiently small, one can assume that
$0 \in \overline{S_i}$ for all $i \in A$. We speak of a Lipschitz
stratification of the germ $(X,0)$.
 
\begin{proposition}\label{prop:extra condition} Let
  $(X,0) \subset (\mathbb R^m,0)$ be a subanalytic germ which is
  normally embedded. We choose a Lipschitz stratification of $(X,0)$
  as above. Then
\begin{itemize}
\item[(3)]for each stratum $S_i$ and $y_i \in S_i$, the germ $(X,y_i)$
  is Lipschitz normally embedded.
\end{itemize}
\end{proposition}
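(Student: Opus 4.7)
The plan is to combine a trivial local reduction with the defining bilipschitz homogeneity along strata of a Lipschitz stratification. By hypothesis, there exist an open neighborhood $U\subset\R^m$ of $0$ and a constant $\lambda\ge 1$ such that $d_{X\cap U}(x_1,x_2)\le \lambda\|x_1-x_2\|$ for all $x_1,x_2\in X\cap U$. Shrinking $\epsilon$ if necessary, we may assume that the chosen Lipschitz stratification $X\cap B_\epsilon=\coprod_{i\in A}S_i$ lives inside this neighborhood, i.e.\ $B_\epsilon\subset U$.

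The first observation is essentially tautological: for every $y\in X\cap U$, the germ $(X,y)$ is Lipschitz normally embedded, because the \emph{same} open set $U$ is also an open neighborhood of $y$, and $X\cap U$ is Lipschitz normally embedded by assumption. In particular, for each stratum $S_i$ the condition $0\in\overline{S_i}$ forces $S_i\cap B_\epsilon\neq\emptyset$; picking any reference point $y_i^*\in S_i\cap B_\epsilon$, we conclude that $(X,y_i^*)$ is Lipschitz normally embedded with constant $\lambda$.

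Now let $y_i\in S_i$ be arbitrary. The defining property of the Lipschitz stratification furnishes a bilipschitz germ homeomorphism $\phi\colon(X,y_i)\to(X,y_i^*)$ with some constant $K\ge 1$, i.e.\ $K^{-1}\|x-x'\|\le\|\phi(x)-\phi(x')\|\le K\|x-x'\|$ on a neighborhood of $y_i$ in $X$. Lipschitz normal embedding of a germ is a bilipschitz invariant: a $K$-Lipschitz map scales the length of every rectifiable curve (and hence the inner distance) by a factor of at most $K$, so combining the outer bilipschitz inequalities with the inner-distance inequality for $(X,y_i^*)$ yields Lipschitz normal embedding of $(X,y_i)$ with constant at most $K^2\lambda$.

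There is no serious obstacle here. The only minor points of care are (i) shrinking $\epsilon$ so that the stratification lives inside the normal-embedding neighborhood $U$, and (ii) observing that the outer bilipschitz equivalence along strata automatically induces an inner bilipschitz equivalence with the same constant, which is a routine curve-length computation.
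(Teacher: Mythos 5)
Your proof is correct, but it takes a genuinely different route from the one in the appendix. The paper argues by contraposition: if some stratum carries a bilipschitz type that is not normally embedded, then (using $0\in\overline{S_i}$ and the constancy of the bilipschitz type along $S_i$) every point of a sequence $a_n\to 0$ in $S_i$ has a non-normally-embedded germ, and a diagonal extraction of witness pairs with $d_i/d_o\to\infty$ accumulating at the origin then contradicts normal embedding of $(X,0)$. You argue directly: a single neighborhood $U$ witnessing normal embedding of $(X,0)$ witnesses it verbatim for every $y\in X\cap U$ (with the paper's Definition 2.1, which only demands the existence of \emph{some} open neighborhood, this really is tautological), and then the outer bilipschitz equisingularity along $S_i$, together with $0\in\overline{S_i}$ (which guarantees a reference point $y_i^*\in S_i\cap U$), transports the property to an arbitrary $y_i\in S_i$; your $K^2\lambda$ computation for the bilipschitz invariance of normal embedding is the right one. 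Both proofs rest on exactly the same two inputs --- constancy of the bilipschitz type along strata and $0\in\overline{S_i}$ --- but your direction avoids the diagonal argument, which is the more delicate half (the paper's passage from ``$(X,a_n)$ is not normally embedded'' to witness pairs for the \emph{global} inner metric needs the same localization of near-geodesics that you also leave implicit). One small redundancy to clean up: once you shrink $\epsilon$ so that $B_\epsilon\subset U$, every stratum lies entirely inside $U$ and the transport step becomes superfluous; keep it only if you retain the stratification's original radius, in which case it is precisely what handles points $y_i\in S_i\setminus U$.
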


\begin{remark} 
  If $(X,0)$ has an isolated singularity at the origin, then a
  Lipschitz stratification of $(X,0)$ consists of the two strata
  $\{0\}$ and $X_{reg}$, so Condition (3) is void.

  Since $A$ is finite, Proposition \ref{prop:extra condition} implies
  that local Lipschitz normal embedding is an open property, i.e., if
  $(X,0)$ is  Lipschitz normally embedded, then there exists $\epsilon>0$ such
  that for all $y \in B_{\epsilon}$, $(X,y)$ is  Lipschitz normally embedded
  (this can also be proved using the Curve Selection Lemma).
\end{remark}

Example \ref{3.12} in the paper does not satisfy Condition (3).  In
this appendix we give an example which satisfies all three conditions
but which is not Lipschitz normally embedded. The example is a complex
germ which satisfies conditions (1) and (2) and has isolated
singularity, so Condition (3) is vacuously true.

\begin{proposition}\label{Example} The surface germ
  $(X,0) \subset (\mathbb C^3,0)$ with equation
  $$y^4 + z^4 + x^2(y+2z)(y+3z)^2 + (x+y+z)^{11} =0$$ is not Lipschitz
  normally embedded while it has isolated singularity at $0$ and its
  tangent cone is reduced and  Lipschitz normally embedded.
\end{proposition}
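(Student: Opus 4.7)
The plan is to check separately the three assertions: that the tangent cone is reduced and Lipschitz normally embedded, that $X$ has isolated singularity at the origin, and that $X$ itself is not Lipschitz normally embedded. For the tangent cone I identify the initial part of $f = y^4+z^4+x^2(y+2z)(y+3z)^2+(x+y+z)^{11}$: the four summands have total degrees $4, 4, 5$, and $11$ respectively, so by Remark \ref{remark-tangent-cone},
$$T_0 X = V(y^4+z^4) \subset \C^3.$$
Since $y^4+z^4 = \prod_{\zeta^4=-1}(y-\zeta z)$ is squarefree, the tangent cone is reduced. Geometrically $T_0 X$ is the union of four distinct complex $2$-planes sharing the $x$-axis; these planes meet transversally along this common axis, and any finite union of linear subspaces meeting along a common subspace is Lipschitz normally embedded via the standard ``project to the common axis, then travel straight in each subspace'' construction.

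For isolated singularity I compute
$$f_x = 2x(y+2z)(y+3z)^2 + 11(x+y+z)^{10},$$
$$f_y = 4y^3 + x^2(y+3z)(3y+7z) + 11(x+y+z)^{10},$$
$$f_z = 4z^3 + x^2(y+3z)(5y+13z) + 11(x+y+z)^{10}.$$
The initial parts of $f_y$ and $f_z$ are $4y^3$ and $4z^3$, so the simultaneous vanishing of $f, f_x, f_y, f_z$ at a point close to the origin pins down the orders of $y$ and $z$ sharply; comparing leading terms across the natural filtrations forces $y = z = 0$, whereupon $f_x = 11 x^{10}$ and $f = x^{11}$ force $x = 0$ as well.

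The non-embedding is the main obstacle. Following the pattern of Examples \ref{2.8}, \ref{3.12}, and \ref{3.14}, I would look for two real-parametrized families $\alpha(t), \beta(t) \in X$ approaching $0$ with $\|\alpha(t)-\beta(t)\|/d_X(\alpha(t),\beta(t)) \to 0$. The asymmetric perturbation $x^2(y+2z)(y+3z)^2$ privileges the direction $y+3z = 0$ (where the factor appears squared), and although this is \emph{not} a tangent direction of the cone, its interaction with the leading term $y^4+z^4$ creates a ``pinch'' in $X$ along this direction at small scales. Concretely, I would fix small $r > 0$, slice $X$ by a hypersurface such as $z = r$, solve for $y$ as a multivalued algebraic function of $x$ in the resulting curve, and select two branches whose outer distance is $O(t^a)$ while the on-curve (and hence inner) distance between them is bounded below by $O(t^b)$ with $b < a$, producing an unbounded ratio as $t \to 0$. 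Carrying this out rigorously --- producing the correct branches and proving the inner lower bound --- is the core difficulty, and most naturally uses the Lipschitz geometry machinery (Lipschitz stratification, carrousel decomposition) referenced earlier in this appendix.
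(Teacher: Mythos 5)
The parts you do carry out are essentially the same as the paper's: the initial form of $f$ is indeed $y^4+z^4$, so $T_0X$ is a reduced union of four planes through the $x$-axis, which is Lipschitz normally embedded; and the paper likewise dismisses the isolated-singularity check as ``a simple calculation'' (though note a slip: $\partial_z\bigl[(y+2z)(y+3z)^2\bigr]=2(y+3z)(4y+9z)$, not $(y+3z)(5y+13z)$, and ``comparing leading terms across the natural filtrations'' is not yet an argument --- one would, e.g., apply the Curve Selection Lemma to a putative branch of $\Sing{X}$ through $0$).

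The genuine gap is that the central claim --- that $X$ is \emph{not} Lipschitz normally embedded --- is never proved: you state a plan and explicitly defer ``the core difficulty.'' Moreover the plan points in a different direction from what actually works. The mechanism in the paper is not a pinch along $y+3z=0$ detected by slicing with $z=r$; it is the generic projection $\ell(x,y,z)=(x,y)$ together with the test curve $\gamma:\ y=x^{5/2}$. The lift $\ell^{-1}$ of the radial arc $x=w^2,\ y=w^5$ contains two branches $p_1(w),p_2(w)$ lying on a curve $\gamma'$ with Puiseux expansion $z=-\tfrac13 w^5+aw^{11/2}+\cdots$, so their \emph{outer} distance has order $w^{11/2}$ (the exponent $11/2$ coming from the $(x+y+z)^{11}$ term). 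The \emph{inner} distance is then bounded below by order $w^{5}$ via a case analysis on whether $\gamma$ and $\gamma'$ lie inside the $\Delta$-wedge $A_0$ and the polar wedge $A_0'$ of \cite{BNP}: outside the wedges $\ell$ is an inner bilipschitz homeomorphism and one reads the rate off the $B(5/2)$-piece containing $\gamma'$; inside, any path joining $p_1$ to $p_2$ must cross the polar curve, and the nine branches of the discriminant all have contact exponent at most $5/2$ with $\gamma$. Since $5<11/2$, the ratio blows up. None of this (the choice of test curve, the computation of the separation exponent $11/2$, the discriminant analysis giving the bound $q\le 5$) is present in your proposal, so the proposition is not established.
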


\begin{proof}[Proof of Proposition \ref{prop:extra condition}]
  Assume that there is a stratum $(S_i,0)$ along which the bilipschitz
  type of $X$ is not normally embedded. If $0 \in S_i$, then obviously
  $(X,0)$ is not normally embedded. Otherwise, $S_i \neq \{0\}$ and
  one can consider a sequence $(a_n)_{n\in \mathbb N}$ in $S_i$ converging to
  0. For each $n\in \mathbb N$, $(X,a_n)$ is not normally embedded, so one
  can choose two sequences of points $(a_{n,k})_{k \in \mathbb N^*}$ and
  $(b_{n,k})_{k \in \mathbb N^*}$ on $X$ converging to $a_n$ such that
  $d_o(a_n, a_{n,k}) <1/k$, $d_o(a_n, b_{n,k}) <1/k$ and
  $\frac{d_i(a_{n,k}, b_{n,k})}{d_o(a_{n,k}, b_{n,k})}\to\infty$ as
  $k\to\infty$ ($d_i$ and $d_o$ mean inner and outer metrics in
  $X$). Then the diagonal sequences $(a_{n,n})$ and $(b_{n,n})$ have
  $\lim_{n \to \infty} ||a_{n,n}|| = \lim_{n \to \infty} ||b_{n,n}|| =
  0$ while
  $\frac{d_i(a_{n,n}, b_{n,n})}{d_o(a_{n,n}, b_{n,n})}\to \infty$ as
  $n\to\infty$. So $(X,0)$ is not Lipschitz normally embedded.
\end{proof}
       
\begin{proof} [Proof of Proposition \ref{Example}]
  A simple calculation shows that $(X,0)$ has isolated singularity at
  $0$. Moreover, the tangent cone $T_0X$ is reduced and Lipschitz
  normally embedded since it is a union of planes.

  We will prove that $(X,0)$ is not Lipschitz normally embedded by
  showing the existence of two continuous arcs
  $p_1 \colon [0,1) \to X$ and $p_2\colon [0,1) \to X$ such that
  $p_1(0)=p_2(0)=0$ and
  $$\lim_{w \to 0} \frac{d_i(p_1(w),p_2(w))}{d_o(p_1(w),p_2(w))} =
  \infty.$$
 
  Let $\ell \colon (X,0) \to (\mathbb C^2, 0)$ be the generic projection
  defined as the restriction of the linear map $(x,y,z) \to
  (x,y)$. Consider the real arc $p$ in $(\mathbb C^2,0)$ parametrized by
  $x=w^2$, $y= w^5$ and $w \in [0,1)$. It is a radial arc inside the
  complex plane curve $\gamma$ given by $y=x^{5/2}$. We will use this
  curve $\gamma$ as a ``test curve'' for the geometry, along with a
  component $\gamma'$, described below, of $\ell^{-1}(\gamma)$.

  Computing, e.g., with Maple, one finds that the lifting $\ell^{-1}(p)$ of
  $p$ by $\ell$ consists of four arcs with two of them, $p_1(w)$ and
  $p_2(w)$, such that $d_o(p_1(w), p_2(w))$ is of order $w^{11/2}$.
  The arcs $p_1$ and $p_2$ are radial arcs inside a complex curve
  $\gamma'$ with Puiseux parametrization of the form
  $z=-\frac{1}{3} w^5 + a w^{11/2}+...$  (where $+...$
    means higher order terms). 
We will now prove that inner distance $d_i(p_1(w), p_2(w))$ is of order $w^q$ with $q \leq 5$. Since $q<11/2$, this gives the result. 

Let $\Pi$ be the polar curve of the projection $\ell$ and let
$\Delta = \ell(\Pi)$ be its discriminant curve.  Using Maple again one
shows that $\Delta$ has nine branches as follows:
\begin{itemize}
\item Five  branches with Puiseux expansions of the form
  $y=b x^2 + ...$;
\item Three  branches with Puiseux expansions of the form
  $y=b x^3 + ... $;
\item One branch with Puiseux expansions of the form
  $y=x^{11/4} + ... $\,.
\end{itemize}

Let $A_0$ be a $\Delta$-wedge around $\Delta$ as defined in  \cite[Proposition 3.4]{BNP}   and let $A'_0$ be the polar-wedge around $\Pi$ defined as the union of the components of  $\ell^{-1}(A_0)$ which contain components of $\Pi$ (see
\cite[Section 3]{BNP} for details). There are three cases: 
 
\noindent {\it Case 1: The curve $\gamma$ is outside $A_0$.} One can
refine the carrousel decomposition of $(\mathbb C^2,0)$ associated with $A_0$
(see \cite[Section 12]{BNP}) to obtain a carrousel decomposition which
has a $B(5/2)$-piece $B$ consisting of complex curves
$y= \lambda x^{5/2}$ with $\alpha \leq |\lambda| \leq \beta$ and
$ \alpha \leq 1 \leq \beta$. So $\gamma$ is inside $B$. Since $\ell$
is an inner bilipschitz homeomorphism outside $A_0$
(\cite[Proposition 3.4]{BNP}), then the components of $\ell^{-1}(B)$ are
$B(5/2)$-pieces. Then any real arc on $X$ from $p_1(w)$ to $p_2(w)$
has to travel through the $B(5/2)$-piece of $\ell^{-1}(B)$ containing
$\gamma'$.  Since distance to the origin inside $B$ is of order
$x=w^2$, this proves that $d_i(p_1(w^2), p_2(w^2))$ is of order
$x^{q'}$ with $q' \leq 5/2$, i.e., $d_i(p_1(w), p_2(w))$ is of order
$w^q$ with $q=2q' \leq 5$.

\noindent {\it Case 2: The curve $\gamma$ is inside $A_0$ and the
  curve $\gamma'$ is inside $A'_0$.} Then any geodesic arc on $X$ from
$p_1(w)$ to $p_2(w)$ has to cross $\Pi$, so
$$d_i(p_1(w^2), p_2(w^2)) \geq d_i(p_1(w^2), \Pi) + d_i(p_2(w^2), \Pi)
\geq d_o(p_1(w^2), \Pi) + d_o(p_2(w^2), \Pi).$$ Since $\ell$ is the
restriction of a linear projection, we have
$d_o(x,y) \geq d_o(\ell(x),\ell(y))$ for all $x,y \in X$. So we get:
$$d_i(p_1(w^2), p_2(w^2)) \geq 2 d_o(p(w^2), \Delta).$$
$d_o(p(w^2), \Delta) = x^{s}$ where $s=2q''$ and $q''$ is the contact
coefficient between the complex curves $\Delta$ and $\gamma$. The nine branches of $\Delta$ described earlier
show that   $q'' \leq 5/2$. Therefore  $d_i(p_1(w), p_2(w))$ is of order $w^q$ with $q  \leq 5$. 
 
 \noindent {\it Case 3: The curve $\gamma$ is inside $A_0$ and the
   curve $\gamma'$ is outside $A'_0$.} Since the curve $\gamma$ has
 contact exponent $5/2$ with any component of $\Delta$, then the polar
 rate $s'$ of $A_0$ (\cite[Proposition 3.4]{BNP}) is $s'\geq 5/2$,
 i.e., $A_0$ is a $D(s')$-piece with $s' \leq 5/2$.  Since $\ell$ is
 an inner bilipschitz homeomorphism outside $A'_0$, then the component
 of $\ell^{-1}(B)$ containing $\gamma'$ is a $D(s')$-piece and the
 same argument as in case 1 gives that $d_i(p_1(w), p_2(w))$ is of
 order $w^q$ with $q=2q' \leq 5$.
\end{proof}

\end{document}